\documentclass[12pt]{article}
\usepackage[utf8]{inputenc}
\usepackage{amsmath}
\setcounter{MaxMatrixCols}{11}
\usepackage[english]{babel}
\usepackage{url}

\usepackage[a4paper,top=3cm,bottom=2cm,left=3cm,right=3cm,marginparwidth=1.75cm]{geometry}

\usepackage{amsmath}
\usepackage{graphicx}
\usepackage[colorinlistoftodos]{todonotes}
\usepackage[colorlinks=true, allcolors=blue]{hyperref}
\usepackage{amssymb}
\usepackage{amssymb}
\usepackage{amsthm}
\usepackage{authblk}
\usepackage{amsmath}
\usepackage{amsfonts}
\usepackage{amssymb}
\usepackage{amscd}
\usepackage{latexsym}
\usepackage{breqn}
\usepackage{cite}
\usepackage{mathtools}
\mathtoolsset{showonlyrefs}

\theoremstyle{plain}
\newtheorem{thm}{Theorem}[section]
\newtheorem{lem}[thm]{Lemma}
\newtheorem{prop}[thm]{Proposition}
\newtheorem{cor}[thm]{Corollary}

\theoremstyle{definition}
\newtheorem{conj}{Conjecture}[section]
\newtheorem{question}{Question}[section]

\theoremstyle{remark}
\newtheorem*{rem}{Remark}

\title{Symmetric matrix representations of truncated Toeplitz operators on finite dimensional spaces}
\author{Ryan O'Loughlin \\ E-mail address: \href{mailto:R.OLoughlin@leeds.ac.uk}{R.OLoughlin@leeds.ac.uk}}
\affil{School of Mathematics, University of Leeds, Leeds, LS2 9JT, U.K.}
\date{}
\setlength{\marginparwidth}{2cm}
\begin{document}

\maketitle
\begin{abstract}
In this paper, we answer an open conjecture concerning complex symmetric matrices and truncated Toeplitz operators. We study matrix representations of truncated Toeplitz operators with respect to orthonormal bases which are invariant under a canonical conjugation map. In particular, we determine necessary and sufficient conditions for when a symmetric matrix is the matrix representation of a truncated Toeplitz operator with respect to a given conjugation invariant orthonormal basis. We specialise our result to the case when the conjugation invariant orthonormal basis is a modified Clark basis. With this specialisation, we answer an open conjecture in the negative, and show not every unitary equivalence between a complex symmetric matrix and a truncated Toeplitz operator arises from modified Clark basis representations. We pose a new refined conjecture for how to realise a model theory for symmetric matrices through the use of truncated Toeplitz operators, and we show this conjecture is equivalent to a specified system of polynomial equations being satisfied.

 \vskip 0.5cm
\noindent Keywords: complex symmetric operator, truncated Toeplitz operator, Toeplitz matrix, model space.
 \vskip 0.5cm
\noindent MSC: 30H10, 47B35, 15B05, 46E20.
\end{abstract}

\section{Introduction}
Since the seminal work of Sarason in 2007 \cite{sarason2007algebraic}, there has been a surge of interest in truncated Toeplitz operators (which we will abbreviate to TTOs) \cite{baranov2010bounded, baranov2010symbols, symbolsofcompact, garciaUETTOanalyticsymbols, bessenovfredholm, TTOUEsimilarity, MR3398735, UETTOstrouse, oloughlin2020nearly, mythesis}. In parallel to this, over the past decade there has also been a growing body of research devoted to complex symmetric operators \cite{CSO1}\cite{CSO2} \cite{CSO3} (definitions of complex symmetric operators and TTOs given in Section \ref{prelims}). Complex symmetric operators are a deceptively wide class of operators; normal, Volterra and rank-one operators are all examples of complex symmetric operators. Although complex symmetric operators have wide reaching applications in numerous fields (we refer the reader to \cite{garciaapp} and references thereafter for a detailed discussion of these applications), one noteworthy domain where complex symmetric operators appear naturally is non-Hermitian Quantum Physics. Non-Hermitian Quantum Physics does not require that the Hamiltonian be a self adjoint operator, but that the Hamiltonian is a symmetric operator which commutes with another so called $\mathcal{P T}$ operator. We refer the reader to \cite{nonhermitianphysics, garciaapp} for a introductory background on this topic, which emphasises how complex symmetric operators appear in this field. 



There is an emerging body of evidence to suggest that truncated Toeplitz operators serve as a model operator for complex symmetric operators. The following is an open question in \cite{garciaopen}.

\begin{question}\label{Q}
Just as multiplication operators $M_z : L^2(X, \mu ) \to L^2(X, \mu)$ play
a fundamental role in decomposing normal operators, can one develop a comparable model theory for complex symmetric operators? There is some indication that truncated Toeplitz operators may play an important role in the resolution of this problem.
\end{question}

\noindent
Decomposing normal operators (i.e. the Spectral Theorem) is a famous cornerstone result of Operator Theory and a positive answer to Question \ref{Q} would be a significant generalisation of the Spectral Theorem to non-normal operators. However as this field of study is still in its infancy, the above question is not yet fully understood even in the finite dimensional case. In finite dimensions, complex symmetric operators are the operators which are unitarily equivalent to a symmetric matrix. In this regard, the following question from \cite{recentprogressGarcia} is a finite dimensional specialisation of Question \ref{Q}.

\begin{question}\label{Q2}
Is every symmetric matrix unitarily equivalent to a direct sum of TTOs?
\end{question}
\noindent The question above is known to be true for $2$-by-$2$ and $3$-by-$3$ matrices (\cite{recentprogressGarcia} and \cite{garciaUETTOanalyticsymbols} respectively), rank one matrices \cite{garciaUETTOanalyticsymbols} and in several other cases.

When one wants to obtain a unitary equivalence between a symmetric matrix and a TTO, the only known constructive way to do this is to compute the matrix representation of the TTO with respect to a conjugation-invariant orthonormal basis (or $C_{\theta}$-real basis for short). The most notable class of $C_{\theta}$-real bases are the modified Clark bases (see Section 8 in \cite{CCOgarcia}). In this vein, the following question was posed in Section 7 of the article \cite{garciaUETTOanalyticsymbols}, and again in Section 9 of the book \cite{recentprogressGarcia}.


\begin{question}\label{gotoQ1}
Suppose that $M$ is a complex symmetric matrix. If $M$ is unitarily equivalent to a TTO, does there exist an inner function $u$, and a modified Clark basis for the model space ${K}_{u}$ such that $M$ is the matrix representation of a TTO on $K_u$ with respect to this basis? In other words, do all such unitary equivalences between complex symmetric matrices and TTOs arise from modified Clark basis representations?
\end{question}

\noindent Regarding a model theory for symmetric operators, Questions \ref{Q} and \ref{Q2} may be viewed as an existence result, whereas Question \ref{gotoQ1} is a constructive question which asks how to realise such a model theory.

The purpose of this paper is to study matrix representations of TTOs with respect to $C_{\theta}$-real bases in order to
answer Question \ref{gotoQ1} in the negative. Then we pose and study a new refined conjecture which hopes to model symmetric matrices through the use of TTOs.

In Section \ref{prelims} we provide the necessary preliminary background in order to study the matrix representations of TTOs with respect to $C_{\theta}$-real bases. In particular, we include a brief discussion on the construction of the modified Clark bases. 

We split Section \ref{sec3} into two subsections. In the first subsection, we show that a given symmetric matrix is the matrix representation of a TTO on $K_{\theta}$ with respect to a given $C_{\theta}$-real basis if and only if a corresponding matrix has non-zero kernel. In the second subsection we specialise the results of the first subsection, and show a given 3-by-3 symmetric matrix is the matrix representation of a TTO on $K_{\theta}$ with respect to a given modified Clark basis for $K_{\theta}$ if and only if the off diagonal entries satisfy a particular linear relation. With this condition, we then answer Question \ref{gotoQ1} in the negative.

In Section \ref{sec4} we show that every matrix representation of a TTO with respect to some $C_{\theta}$-real basis is orthogonally equivalent to the matrix representation of the same TTO with respect to a modified Clark basis. With this realisation we can then show that a given 3-by-3 symmetric matrix is the matrix is the matrix representation of a TTO on a model space $K_{\theta}$ with respect to a $C_{\theta}$-real basis for $K_{\theta}$ if and only if a specified system of polynomial equations is satisfied with a real solution.

\section{Preliminaries}\label{prelims}
We denote the open unit disc in the complex plane by $\mathbb{D}$ and the unit circle by $\mathbb{T}$. The Hardy space, $H^2$, is the space of all analytic functions $f(z) = \sum_{n=0}^{\infty}a_n z^n $ on $\mathbb{D}$ such that 
$$
\|f\|=\left(\sum_{n=0}^{\infty}\left\|a_n\right\|^2\right)^{\frac{1}{2}}<\infty .
$$
For each $\zeta \in \mathbb{T}$ and $f \in H^2$, we can define $f(\zeta) := \lim_{r \to 1} f(r \zeta)$ to be the radial limit of $f$ at $\zeta$. It can be shown that $f(\zeta)$ exists for almost every $\zeta \in \mathbb{T}$ and $f(\zeta) \in L^2 ( \mathbb{T})$, so with this correspondence we view $H^2 \subseteq L^2(\mathbb{T})$. We refer the reader to \cite{duren1970theory, nikolski2002operators} for a detailed background on the Hardy space. Throughout we let $\theta$ be an inner function (i.e. a function in $H^2$ which is unimodular on $\mathbb{T}$). A famous theorem of Beurling characterises all the forward shift invariant subspaces of $H^2$. It states that all nontrivial closed forward shift invariant subspaces are of the form $\theta H^2$, where $\theta$ is some inner function. The backward shift is the adjoint of the forward shift on $H^2$ and is given by the map $f(z) \mapsto \frac{f - f(0)}{z}$. It follows from Beurling's Theorem that all nontrivial closed backward shift invariant subspaces of $H^2$ are of the form $K_\theta: = H^2 \ominus \theta H^2 =  H^2 \cap \theta \overline{zH^2}$, where the multiplication $\theta \overline{zH^2}$ is understood as functions on the unit circle. The space $K_\theta$ is referred to as a \textit{model space}, and we refer the reader to \cite{cima2000backward} for a detailed background on model spaces.

Model spaces are reproducing kernel Hilbert spaces, where the reproducing kernel at $\lambda \in \mathbb{D}$ is given by \begin{equation}\label{RK}
    k_{\lambda}(z) =\frac{1-\overline{\theta(\lambda)} \theta(z)}{1-\bar{\lambda} z},
\end{equation} (see Section 5.5 of \cite{modelspacesgarcia}). A model space $K_\theta$ is finite dimensional if and only if $\theta$ is a finite Blaschke product (Proposition 5.19 in \cite{modelspacesgarcia}), and in this case the order of $\theta$ (i.e. the number of zeros of $\theta$ in the disc) is equal to the dimension of $K_{\theta}$. In the case when $\theta$ is a finite Blaschke product, for every $\zeta \in \mathbb{T}$ it can be seen that $\theta$ and $\theta' $ have a nontangential limit at $\zeta$ and $| \theta( \zeta) | =1$. By Theorem 2 in \cite{recentprogressGarcia} this is equivalent to every $f \in K_{\theta}$ having a nontangential limit at every $\zeta \in \mathbb{T}$, and thus every $f\in K_{\theta}$ can be defined everywhere on $\mathbb{T}$. As a result of this, when $\theta$ is a finite Blaschke product we can consider the reproducing kernel at $\zeta \in \mathbb{T}$, which is given by $$k_{\zeta}(z) =\frac{1-\overline{\theta(\zeta)} \theta(z)}{1-\bar{\zeta} z} \in K_{\theta}.$$ For vectors $x, y \in K_{\theta}$ we use the notation $x \otimes y : K_{\theta} \to K_{\theta}$ to mean the map $f \mapsto \langle f, y \rangle x$

If $\mathcal{H}$ is a Hilbert space, then we say that $C$ is a conjugation operator on $\mathcal{H}$ if the following conditions hold:
\newline 
(a) $C$ is antilinear:
$$
C\left(a_{1} f_{1}+a_{2} f_{2}\right)=\overline{a_{1}} C f_{1}+\overline{a_{2}} C f_{2}
$$
for all $a_{1}, a_{2} \in \mathbb{C}$ and $f_{1}, f_{2} \in \mathcal{H}$.\newline
(b) $C$ is isometric:
\begin{equation}\label{b}
\langle C f, C g\rangle=\langle g, f\rangle
\end{equation}
for all $f, g \in \mathcal{H}$.
\newline
(c) $C$ is involutive: \begin{equation}\label{c}
C^{2}=I .
\end{equation}
\newline An operator $F$ on a Hilbert space $\mathcal{H}$ is said to be \textit{$C$-symmetric} if $CFC = F^*$, and an operator is \textit{complex symmetric} if it is $C$-symmetric with respect to some conjugation map $C$.

The space $K_{\theta}$ carries a canonical conjugation operator given by $C_{\theta}f = \theta \overline{zf}$. This model space conjugation operator is discussed at length in \cite{CCOgarcia}. Furnishing an orthonormal basis for $K_{\theta}$ which is invariant under $C_{\theta}$ turns out to be a non-trivial task. Such a basis is called a \textit{$C_{\theta}$-real basis}, and Lemma 2.6 in \cite{CCOgarcia} shows every model space $K_{\theta}$ admits a $C_{\theta}$-real basis.

Although a detailed explanation of how to construct a $C_{\theta}$-real basis for a model space is given by Section 8 in \cite{CCOgarcia}, we outline the approach here. In the case when the inner function $\theta$ is a finite Blaschke product, a $C_{\theta}$-real basis is given by the unit eigenvectors of a generalised Clark operator $U_{t, \alpha} : K_{\theta} \to K_{\theta}$,
\begin{equation}\label{clarkoperator}
    U_{t, \alpha}=S_{t}+(\alpha+\theta(t))\left(k_{t} \otimes C_{\theta} k_{t}\right),
\end{equation}
where $t \in \mathbb{D}$, $ \alpha \in \mathbb{T}$ and $S_{t} f=P_{\theta}\left(\frac{z-t}{1-\bar{t} z} f\right)$ for $P_{\theta} : L^2 \to K_{\theta}$, the orthogonal projection. We refer to a $C_{\theta}$-real basis of this form as a \textit{modified Clark basis}. Explicitly the unit eigenvectors of $U_{t, \alpha}$ are given by the formula
\begin{equation}\label{explicitcb}
    \textbf{cb}_i(z)=\left(\overline{\eta_{i}} \frac{\alpha+\theta(t)}{1+\overline{\theta(t)} \alpha}\right)^{\frac{1}{2}} \frac{k_{\eta_{i}}}{\|k_{\eta_{i}}\|},
\end{equation}
where $\eta_i$ are the points such that 
\begin{equation}\label{above}
    \theta\left(\eta_{i}\right)=\frac{\alpha+\theta(t)}{1+\overline{\theta(t)} \alpha},
\end{equation} and where we take the convention that the above square root is defined by taking $\delta_1 = \arg (\overline{\eta_i}) \in [0, 2 \pi)$, $\delta_2 = \arg \left( \frac{\alpha+\theta(t)}{1+\overline{\theta(t)} \alpha} \right ) \in [0, 2 \pi)$ and setting $\arg \left(\left( \overline{\eta_i} \frac{\alpha+\theta(t)}{1+\overline{\theta(t)} \alpha}\right)^{\frac{1}{2}} \right) = \frac{\delta_1 + \delta_2}{2}$. Observe that each $t \in \mathbb{D}$, $ \alpha \in \mathbb{T}$ will give a different corresponding modified Clark basis. We will use the description of the modified Clark basis given by \eqref{explicitcb} throughout.

If $\theta$ is order $n$, then Theorem 3.4.10 in \cite{FBPandconnections} ensures that there are precisely $n$ distinct $\eta_i$ satisfying \eqref{above} and each $\eta_i$ lies on $\mathbb{T}$. We remark that either choice of the square root will give a $C_{\theta}$-real basis, however our choice of square root will make our working in later sections easier to follow.

Let $\theta$ be a finite Blaschke product. For $\phi \in L^{\infty}(\mathbb{T})$, we define the truncated Toeplitz operator (which we have abbreviated to TTO), $A_{\phi}:  K_{\theta} \to K_{\theta}$, by 
$$
A_{\phi}(f) = P_{\theta} ( \phi f ).
$$
For the Blaschke product $\theta(z) =  z^n$, we have  $K_{z^n} = \operatorname{span} 1,z,z^2, ..., z^{n-1}$ and every TTO on $K_{z^n}$ is a Toeplitz matrix, so TTOs may be viewed as generalisations of Toeplitz matrices. Although the TTO implicitly depends on our choice of $\theta$ we will omit this from our notation and we will often suppress the $\phi$ subscript and just write $A$ instead of $A_{\phi}$.

Throughout we denote $\mathcal{T}_{\theta}$ to be the space of all TTOs on the model space $K_{\theta}$. For $A \in \mathcal{T}_{\theta}$ and a $C_{\theta}$-real basis for $K_{\theta}$ given by $v_j$ where $j=1,..., \dim K_{\theta}$, we denote $[A]_{v_j}$ to be the matrix representation of $A$ with respect to $v_j$. Lemma 2.7 in \cite{CCOgarcia} shows that the matrix representation of a $C$-symmetric operator with respect to a $C$-real basis is a symmetric matrix and so as $A \in \mathcal{T}_{\theta}$ is $C$-symmetric with respect to the canonical conjugation $C_{\theta}$ (see Lemma 2.1 in \cite{sarason2007algebraic}), we must have that $[A]_{{v}_j}$ is a symmetric matrix. For this reason, when we are trying to realise which matrices are the matrix representation of a TTO with respect to a $C_{\theta}$-real basis we in fact only need to consider which symmetric matrices are the representation of a TTO with respect to a $C_{\theta}$-real basis.


\section{Matrix representations of TTOs with respect to $C_{\theta}$-real bases}\label{sec3}

\subsection{General $C_{\theta}$-real basis}\label{sec3.1}
The following is a generalisation of Theorem 7.1 (b) in \cite{sarason2007algebraic}.
\begin{lem}
Let $\theta$ be a Blaschke product of order 3, let $t_1, t_2, t_3$ be distinct points in $ \mathbb{T}$ and $\lambda_4, \lambda_5$ distinct points in $ \mathbb{D}$, then 
\begin{equation}\label{specified}
    k_{t_1} \otimes k_{t_1} \quad k_{t_2} \otimes k_{t_2} \quad k_{t_3} \otimes k_{t_3} \quad k_{\lambda_4} \otimes C_{\theta}k_{\lambda_4} \quad k_{\lambda_5} \otimes C_{\theta}k_{\lambda_5}
\end{equation}
is a basis for $\mathcal{T}_{\theta}$.
\end{lem}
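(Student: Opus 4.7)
The plan is a three-step argument combining dimension counting with a direct linear independence check.

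Step one (dimension): by Theorem 7.1(a) of \cite{sarason2007algebraic}, $\dim\mathcal{T}_\theta = 2\dim K_\theta - 1 = 5$. Since the list in \eqref{specified} contains exactly five operators, it remains only to verify that each operator lies in $\mathcal{T}_\theta$ and that the five are linearly independent. Step two (membership): I would cite Sarason's classification of rank-one TTOs (Theorem 5.1 of \cite{sarason2007algebraic}), which shows $k_\lambda \otimes C_\theta k_\lambda$ is a TTO for every $\lambda \in \mathbb{D}$; since $\theta$ is a finite Blaschke product, it has an angular derivative at every point of $\mathbb{T}$, so $k_t \otimes k_t$ is also a TTO for every $t \in \mathbb{T}$.

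Step three (linear independence): I would suppose
\[
\sum_{j=1}^{3} c_j (k_{t_j}\otimes k_{t_j}) + c_4 (k_{\lambda_4}\otimes C_\theta k_{\lambda_4}) + c_5 (k_{\lambda_5}\otimes C_\theta k_{\lambda_5}) = 0,
\]
and apply both sides in turn to each of the vectors $k_{t_1}, k_{t_2}, k_{t_3}$. Using the rank-one identities $(k_{t_j}\otimes k_{t_j})(f) = f(t_j)\,k_{t_j}$ and $(k_\lambda\otimes C_\theta k_\lambda)(f) = \overline{(C_\theta f)(\lambda)}\,k_\lambda$ (the latter following from $\langle f, C_\theta g \rangle = \langle g, C_\theta f \rangle$ together with the reproducing property), and the fact that three reproducing kernels at distinct points of $\mathbb{T}$ form a basis of the three-dimensional model space $K_\theta$, I would expand $k_{\lambda_4}$ and $k_{\lambda_5}$ in the basis $\{k_{t_1},k_{t_2},k_{t_3}\}$ and compare coefficients in each $k_{t_i}$-direction to extract a system of scalar linear equations in $c_1,\ldots,c_5$.

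The main obstacle is verifying that this resulting system has only the trivial solution. Concretely, for each fixed $j$, varying the test vector across $k_{t_1}, k_{t_2}, k_{t_3}$ produces a $3\times 3$ subsystem in $(c_j, c_4, c_5)$ whose coefficient matrix is built from the values $k_{t_i}(t_j)$, $k_{\lambda_m}(t_i)$, and the unimodular scalars $\gamma_i$ determined by $C_\theta k_{t_i} = \gamma_i k_{t_i}$. I expect the argument will proceed by noting that, because the Gram matrix of the linearly independent family $\{k_{t_1},k_{t_2},k_{t_3}\}$ is invertible, the vectors $u_j := (k_{t_i}(t_j))_i$ form a basis of $\mathbb{C}^3$, and then that the membership relations $c_j u_j \in \mathrm{span}(v_4, v_5)$ (with $v_m := (\overline{\gamma_i}k_{\lambda_m}(t_i))_i$) can only hold simultaneously if $c_1 = c_2 = c_3 = 0$; a separate non-degeneracy check on $v_4, v_5$ then forces $c_4 = c_5 = 0$. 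The hard part is carrying out the final non-degeneracy computations cleanly using the explicit form \eqref{RK} of the reproducing kernel, and handling the implicit distinctness hypothesis on the five parameters.
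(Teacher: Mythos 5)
Your skeleton (dimension count via Theorem 7.1(a), membership via Theorem 5.1, then linear independence) matches the paper's, and the first two steps are fine. The gap is in the linear independence step, which is the only real content of the lemma: you set up the right identity but then explicitly defer the decisive computation ("the main obstacle is verifying that this resulting system has only the trivial solution"), and the route you choose makes that verification genuinely hard. Testing the operator identity against the three kernels $k_{t_1},k_{t_2},k_{t_3}$ and expanding everything in that basis produces a $9\times 5$ system whose coefficients involve the Gram values $k_{t_i}(t_j)$, the expansion coefficients $w_{mj}$ of $k_{\lambda_m}$ in the kernel basis, and the unimodular multipliers $\gamma_i$; your proposed reduction to "$c_ju_j\in\mathrm{span}(v_4,v_5)$ forces $c_j=0$" is not justified as stated (for instance, some of the $w_{mj}$ may vanish, degenerating the $3\times3$ subsystems you describe), and no non-degeneracy argument is actually supplied. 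As written, the proof is incomplete.

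The idea you are missing is to test against better vectors than the kernels themselves. Since $K_\theta$ consists of rational functions $p/\bigl((1-\overline{z_1}z)(1-\overline{z_2}z)(1-\overline{z_3}z)\bigr)$ with $p$ an arbitrary quadratic, for any three distinct points one can choose $f\in K_\theta$ with $f(t_2)=f(t_3)=0$ and $f(t_1)\neq 0$. Applying the vanishing linear combination to such an $f$ kills the $j=2,3$ terms outright and leaves
\[
c_1 f(t_1)\,k_{t_1}+c_4\langle f,C_\theta k_{\lambda_4}\rangle\,k_{\lambda_4}+c_5\langle f,C_\theta k_{\lambda_5}\rangle\,k_{\lambda_5}=0 ,
\]
so the linear independence of the three distinct kernels $k_{t_1},k_{\lambda_4},k_{\lambda_5}$ (proved by the same vanishing-function trick) gives $c_1=0$ immediately, and symmetrically $c_2=c_3=0$; the remaining two operators are handled by applying the residual identity to $C_\theta f$ for $f$ vanishing at $\lambda_4$ but not $\lambda_5$ (or by citing Sarason's Theorem 7.1). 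This choice of test vector replaces your undone rank computation with a one-line deduction. You are right that distinctness of the five parameters is implicitly required; that hypothesis is what makes the separating functions $f$ available.
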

\begin{proof}
Theorem 5.1 in \cite{sarason2007algebraic} shows that the specified functions are in indeed in $\mathcal{T}_{\theta}$ and Theorem 7.1(a) in \cite{sarason2007algebraic} shows the dimension of $\mathcal{T}_{\theta}$ is 5, so it suffices to show that the functions given by \eqref{specified} are linearly independent.

We first show that any 3 distinct reproducing kernels in $K_{\theta}$ are linearly independent. The well known result, which can be found as Corollary 5.18 in \cite{modelspacesgarcia}, shows that if the zeroes of $\theta$ are given by $z_n$ (allowing for repeated zeroes) then 
$$
K_{\theta} = \left\{ \frac{a_0 +a_1 z + a_2 z^2}{(1-\overline{z_1}z)(1-\overline{z_2}z) (1-\overline{z_3}z)} \, : \, a_1, a_2 , a_3 \in \mathbb{C}  \right\}.
$$
Thus for any three distinct points $x_1 , x_2 , x_3$ in $\mathbb{D} \cup \mathbb{T}$, we can define a function 
$$
f(z) := \frac{(z-x_2)(z-x_3)}{(1-\overline{z_1}z)(1-\overline{z_2}z) (1-\overline{z_3}z)} \in K_{\theta},
$$
such that $f(x_2) =f(x_3) = 0$, but $f(x_1) \neq 0$. So if for any three distinct points $x_1 , x_2 , x_3$ in $\mathbb{D} \cup \mathbb{T}$ we have 
$$
\sum_{i=1}^3 \mu_i k_{x_i} = 0,
$$
for some constants $\mu_i$, then for the $f$ defined above we have 
$$
0 = \langle f , \sum_{i=1}^3 \mu_i k_{x_i} \rangle = \overline{\mu_1} f(x_1),
$$
which implies $\mu_1 =0$. We can repeat a similar argument to show $\mu_2, \mu_3$ are both 0.

So if we have complex constants $\alpha_1, ..., \alpha_5$ such that 
$$
\sum_{i=1}^3 \alpha_i (k_{t_i} \otimes k_{t_i}) + \sum_{i=4}^5 \alpha_i (k_{\lambda_i} \otimes C_{\theta} k_{\lambda_i}) = 0,
$$
then for $f \in K_{\theta}$ such that $f(t_2) = f(t_3) = 0$  and $f(t_1) \neq 0$, we have 
\begin{equation}\label{Cthetaf}
     0=\left(\sum_{i=1}^3 \alpha_i (k_{t_i} \otimes k_{t_i}) + \sum_{i=4}^5 (\alpha_i k_{\lambda_i} \otimes C_{\theta} k_{\lambda_i}) \right) (f) = {\alpha_1} f(t_1) k_{t_1} + \sum_{i=4}^5 {\alpha_i} k_{\lambda_i} \langle f, C_{\theta} k_{\lambda_i} \rangle .
\end{equation}
Now linear independence of $k_{t_1}, k_{\lambda_4}, k_{\lambda_5}$ guarantees $\alpha_1 =0$.

A similar argument shows $\alpha_2, \alpha_3$ are $0$ and from here the linear independence of $k_{\lambda_4} \otimes C_{\theta} k_{\lambda_4}, k_{\lambda_5} \otimes C_{\theta} k_{\lambda_5}$ ,which is shown by Theorem 7.1 in \cite{sarason2007algebraic}, ensures $\alpha_4, \alpha_5$ are 0. (Alternatively one can show $\alpha_4, \alpha_5$ are $0$, by noticing that for $f \in K_{\theta}$ such that $f(\lambda_4) = 0$, $f(\lambda_5) \neq 0$ we have $ 0 =  \left(\sum_{i=4}^5 (\alpha_i k_{\lambda_i} \otimes C_{\theta} k_{\lambda_i})\right) (C_{\theta}(f))  = \alpha_5 \overline{f( \lambda_5)} k_{\lambda_5}$, and so $\alpha_5 = 0$.) 

\end{proof}

\begin{thm}\label{detthm}
Let $\theta$ be a Blaschke product of order 3, let ${v}_1, {v}_2, {v}_3$ be a $C_{\theta}$-real basis for $K_{\theta}$, let $t_1, t_2, t_3$ be distinct points on $\mathbb{T}$ and $\lambda_4, \lambda_5$ be distinct points in $\mathbb{D}$. Then a symmetric matrix $$
S = \begin{pmatrix}
s_1 & s_4 & s_5 \\
s_4 & s_2 & s_6 \\
s_5 & s_6 & s_3
\end{pmatrix},
$$
is a matrix representation of a TTO on $K_{\theta}$ with respect to $v_j$ if and only if
$$
\det [c_1 , c_2, c_3, c_4, c_5, \tilde{S}] =0,
$$
where \begin{equation}\label{c_i}
    c_i = \begin{pmatrix}
v_1(t_i) \overline{v_1(t_i)} \\
v_2(t_i) \overline{v_2(t_i)} \\
v_3(t_i) \overline{v_3(t_i)} \\
v_1(t_i) \overline{v_2(t_i)} \\
v_1(t_i) \overline{v_3(t_i)} \\
v_2(t_i) \overline{v_3(t_i)} 
\end{pmatrix} \text{ for } i=1,2,3 , \quad  c_i = \begin{pmatrix}
\overline{v_1(\lambda_i)v_1(\lambda_i)} \\
 \overline{v_2(\lambda_i)v_2(\lambda_i)} \\
 \overline{v_3(\lambda_i)v_3(\lambda_i)} \\
 \overline{v_1(\lambda_i)v_2(\lambda_i)} \\
 \overline{v_1(\lambda_i)v_3(\lambda_i)} \\
 \overline{v_2(\lambda_i)v_3(\lambda_i)} 
\end{pmatrix} \text{ for } i=4,5,
\end{equation} and \begin{equation}\label{S}
    \tilde{S} = \begin{pmatrix}
s_1 \\
s_2 \\
s_3 \\
s_4 \\
s_5 \\
s_6 
\end{pmatrix}.
\end{equation}

\end{thm}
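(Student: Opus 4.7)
The strategy is to turn the question of realising $S$ as $[A]_{v_j}$ for some $A\in\mathcal{T}_\theta$ into a linear-algebra question in $\mathbb{C}^6$. By the previous lemma, every $A\in\mathcal{T}_\theta$ has a unique expansion
$$
A \;=\; \sum_{i=1}^{3}\alpha_i\,(k_{t_i}\otimes k_{t_i}) \;+\; \sum_{i=4}^{5}\alpha_i\,(k_{\lambda_i}\otimes C_\theta k_{\lambda_i}),
$$
with $\alpha_1,\ldots,\alpha_5\in\mathbb{C}$. Moreover, since $A$ is $C_\theta$-symmetric and $v_1,v_2,v_3$ is a $C_\theta$-real basis, the matrix $[A]_{v_j}$ is automatically complex symmetric (Lemma 2.7 in \cite{CCOgarcia}) and is therefore encoded faithfully by the six independent entries assembled in $\tilde S$. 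Thus $S=[A]_{v_j}$ for some TTO $A$ if and only if $\tilde S$ is a $\mathbb{C}$-linear combination of the five vectors obtained by vectorising the matrices of the basis operators above.

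Next I would compute those five vectors directly. For $i=1,2,3$, the reproducing identity $\langle f, k_{t_i}\rangle = f(t_i)$ yields
$$
\bigl\langle (k_{t_i}\otimes k_{t_i})v_k,\,v_j\bigr\rangle
\;=\; \langle v_k,k_{t_i}\rangle\,\langle k_{t_i},v_j\rangle
\;=\; v_k(t_i)\,\overline{v_j(t_i)},
$$
and reading off the six symmetric entries in the order imposed by $\tilde S$ reproduces precisely the vector $c_i$ of the statement. For $i=4,5$, I would use the hypothesis $C_\theta v_k = v_k$ together with the antilinear isometry $\langle C_\theta f, C_\theta g\rangle = \langle g, f\rangle$ to compute
$$
\langle v_k,\,C_\theta k_{\lambda_i}\rangle
\;=\; \langle C_\theta v_k,\,C_\theta k_{\lambda_i}\rangle
\;=\; \langle k_{\lambda_i},\,v_k\rangle
\;=\; \overline{v_k(\lambda_i)},
$$
from which
$$
\bigl\langle (k_{\lambda_i}\otimes C_\theta k_{\lambda_i})v_k,\,v_j\bigr\rangle
\;=\; \overline{v_k(\lambda_i)}\cdot\overline{v_j(\lambda_i)}
\;=\; \overline{v_k(\lambda_i)\,v_j(\lambda_i)},
$$
which is exactly the vector $c_i$ in the statement. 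Consequently, the matrix equation $S=[A]_{v_j}$ translates into the vector equation $\tilde S = \sum_{i=1}^{5}\alpha_i\,c_i$ in $\mathbb{C}^6$.

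To finish, I would invoke the previous lemma one more time: since the five operators $k_{t_i}\otimes k_{t_i}$ and $k_{\lambda_i}\otimes C_\theta k_{\lambda_i}$ are linearly independent in the $5$-dimensional space $\mathcal{T}_\theta$, and since the vectorisation $A\mapsto \tilde S$ is a linear isomorphism from complex symmetric $3\times 3$ matrices to $\mathbb{C}^6$, the vectors $c_1,\ldots,c_5$ are linearly independent in $\mathbb{C}^6$. It then follows that $\tilde S$ lies in $\operatorname{span}\{c_1,\ldots,c_5\}$ if and only if the $6\times 6$ matrix $[c_1\;c_2\;c_3\;c_4\;c_5\;\tilde S]$ is singular, giving the desired determinantal criterion in both directions of the equivalence. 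The only place requiring real care is the $C_\theta$-computation for the second family of summands and the bookkeeping matching $(j,k)$-entries of $S$ against the ordering $s_1,\ldots,s_6$; the symmetry of $S$ trivialises the latter, and once $C_\theta v_k=v_k$ is exploited, the remainder is a reproducing-kernel calculation.
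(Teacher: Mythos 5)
Your proposal is correct and follows essentially the same route as the paper: compute the matrix representations of the five rank-one basis operators from the preceding lemma, use complex symmetry to vectorise everything into $\mathbb{C}^6$, and convert membership of $\tilde S$ in the span of $c_1,\dots,c_5$ into the vanishing of the $6\times 6$ determinant. Your explicit justification that the $c_i$ are linearly independent (via injectivity of the vectorised representation map) is a small point the paper only asserts parenthetically, but the argument is otherwise identical.
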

\begin{rem}
This theorem may easily be altered so that $t_1, t_2, t_3,  \lambda_4, \lambda_5$ may take any values in $ \mathbb{D} \, \cup \, \mathbb{T}$. The reason we phrase the theorem so that $t_1, t_2, t_3 \in \mathbb{T}$ and $\lambda_4, \lambda_5 \in \mathbb{D}$ is because this will allow us to easily specialise the result to the case when $v_j$ is a modified Clark basis in the next subsection.
\end{rem}

\begin{proof}
We first compute $[k_{t_i} \otimes k_{t_i}]_{v_j}$ for $i=1,2,3$ and $[k_{\lambda_i} \otimes C_{\theta}k_{\lambda_i}]_{v_j}$ for $i=4,5$. For $i=1,2,3$ we have
$$
k_{t_i} = \sum_{j=1}^3 \langle k_{t_i}, v_j \rangle v_j = \sum_{j=1}^3 \overline{v_j(t_i)} v_j,
$$
and so 
$$
[k_{t_i} \otimes k_{t_i}]_{v_j} = \begin{pmatrix}
v_1(t_i)\begin{pmatrix}
\overline{v_1(t_i)} \\
\overline{v_2(t_i)} \\
\overline{v_3(t_i)} \\
\end{pmatrix} & \, v_2(t_i) \begin{pmatrix}
\overline{v_1(t_i)} \\
\overline{v_2(t_i)} \\
\overline{v_3(t_i)} \\
\end{pmatrix} & \, v_3(t_i) \begin{pmatrix}
\overline{v_1(t_i)} \\
\overline{v_2(t_i)} \\
\overline{v_3(t_i)} \\
\end{pmatrix} 
\end{pmatrix}.
$$
Similarly for $i=4,5$ as each $v_j$ is $C_{\theta}$-invariant using properties \eqref{b} \eqref{c} we have $\langle v_j, C_{\theta}k_{\lambda_i} \rangle = \overline{v_j(\lambda_i)}$, and so
$$
[k_{\lambda_i} \otimes C_{\theta}k_{\lambda_i}]_{v_j}=  \begin{pmatrix}
\overline{v_1(\lambda_i)}\begin{pmatrix}
\overline{v_1(\lambda_i)} \\
\overline{v_2(\lambda_i)} \\
\overline{v_3(\lambda_i)} \\
\end{pmatrix} & \overline{v_2(\lambda_i)} \begin{pmatrix}
\overline{v_1(\lambda_i)} \\
\overline{v_2(\lambda_i)} \\
\overline{v_3(\lambda_i)} \\
\end{pmatrix} & \overline{v_3(\lambda_i)} \begin{pmatrix}
\overline{v_1(\lambda_i)} \\
\overline{v_2(\lambda_i)} \\
\overline{v_3(\lambda_i)} \\
\end{pmatrix} 
\end{pmatrix}.
$$
By the previous lemma we can say that $[k_{t_i} \otimes k_{t_i}]_{v_j}$ for $i=1,2,3$ $[k_{\lambda_i} \otimes C_{\theta}k_{\lambda_i}]_{v_j}$ for $i=4,5$ is a basis for the space of matrix representations of $\mathcal{T}_{\theta}$ with respect to $v_j$ (i.e. a basis for $ [ \mathcal{T}_{\theta} ]_{ v_j} = \{ [A]_{v_j} \text{ such that } A \in \mathcal{T}_{\theta} \}$). 

So for a matrix $$S = \begin{pmatrix}
s_1 & s_4 & s_5 \\
s_4 & s_2 & s_6 \\
s_5 & s_6 & s_3
\end{pmatrix},
$$
there exists an $A \in \mathcal{T}_{\theta}$ such that $[A]_{v_j} =S$ if and only if there exists $\mu_1, ... , \mu_5 \in \mathbb{C}$ such that 
\begin{equation}\label{symmetry}
    \sum_{i=1}^3 \mu_i \begin{pmatrix}
v_1(t_i)\begin{pmatrix}
\overline{v_1(t_i)} \\
\overline{v_2(t_i)} \\
\overline{v_3(t_i)} \\
\end{pmatrix} & v_2(t_i) \begin{pmatrix}
\overline{v_1(t_i)} \\
\overline{v_2(t_i)} \\
\overline{v_3(t_i)} \\
\end{pmatrix} & v_3(t_i) \begin{pmatrix}
\overline{v_1(t_i)} \\
\overline{v_2(t_i)} \\
\overline{v_3(t_i)} \\
\end{pmatrix} 
\end{pmatrix} 
\end{equation}
$$ + \newline  \sum_{i=4}^5 \mu_i \begin{pmatrix}
\overline{v_1(\lambda_i)}\begin{pmatrix}
\overline{v_1(\lambda_i)} \\
\overline{v_2(\lambda_i)} \\
\overline{v_3(\lambda_i)} \\
\end{pmatrix} & \overline{v_2(\lambda_i)} \begin{pmatrix}
\overline{v_1(\lambda_i)} \\
\overline{v_2(\lambda_i)} \\
\overline{v_3(\lambda_i)} \\
\end{pmatrix} & \overline{v_3(\lambda_i)} \begin{pmatrix}
\overline{v_1(\lambda_i)} \\
\overline{v_2(\lambda_i)} \\
\overline{v_3(\lambda_i)} \\
\end{pmatrix} 
\end{pmatrix} = S.
$$

As noted in the previous section, each summand in the above expression must be a symmetric matrix. So using the symmetry of the matrices, we see when considering whether there exists $\mu_1, ..., \mu_5$ such that \eqref{symmetry} holds, we in fact only need to consider whether there are $\mu_1, ..., \mu_5$ such that

\begin{equation}\label{symmetry2}
    \sum_{i=1}^3 \mu_i \begin{pmatrix}
v_1(t_i)\begin{pmatrix}
\overline{v_1(t_i)} \\
\overline{v_2(t_i)} \\
\overline{v_3(t_i)} \\
\end{pmatrix} & v_2(t_i) \begin{pmatrix}
0 \\
\overline{v_2(t_i)} \\
\overline{v_3(t_i)} \\
\end{pmatrix} & v_3(t_i) \begin{pmatrix}
0 \\
0 \\
\overline{v_3(t_i)} \\
\end{pmatrix} 
\end{pmatrix} 
\end{equation}
$$ + \newline  \sum_{i=4}^5 \mu_i \begin{pmatrix}
\overline{v_1(\lambda_i)}\begin{pmatrix}
\overline{v_1(\lambda_i)} \\
\overline{v_2(\lambda_i)} \\
\overline{v_3(\lambda_i)} \\
\end{pmatrix} & \overline{v_2(\lambda_i)} \begin{pmatrix}
0 \\
\overline{v_2(\lambda_i)} \\
\overline{v_3(\lambda_i)} \\
\end{pmatrix} & \overline{v_3(\lambda_i)} \begin{pmatrix}
0 \\
0 \\
\overline{v_3(\lambda_i)} \\
\end{pmatrix} 
\end{pmatrix} = \begin{pmatrix}
s_1 & 0 & 0 \\
s_4 & s_2 & 0 \\
s_5 & s_6 & s_3
\end{pmatrix}.
$$
Rewriting this in terms of column vectors, we see that this is actually equivalent to the existence of $\mu_1, ..., \mu_5 \in \mathbb{C}$ such that $\sum_{i=1}^5 \mu_i c_i = \tilde{S}$, or equivalently (as $c_1 , c_2, c_3, c_4, c_5$ are linearly independent)
$$
\det [c_1 , c_2, c_3, c_4, c_5, \tilde{S}] =0. \qedhere
$$
\end{proof}
The above theorem generalises to the case when $\theta $ is a Blaschke product of order $n \in \mathbb{N}$. To do this we define the vectorialisation map. Let $S_n(\mathbb{C})$ denote the $n$-by-$n$ symmetric matrices with complex entries. The vectorialisation map, denoted $V :  S_{n} (\mathbb{C}) \to \mathbb{C}^{\frac{n(n+1)}{2}}$, is defined by $M = (a_{ij}) \mapsto V(M)$ where
$$
V(M) = \begin{pmatrix}
a_{11} \\
a_{12} \\
\vdots \\
a_{1n} \\
a_{22} \\
a_{23} \\
\vdots \\
a_{2n} \\
a_{33} \\
\vdots \\
a_{3n} \\
\vdots \\
\vdots \\
a_{(n-1)(n-1)} \\
a_{(n-1) n} \\
a_{nn}
\end{pmatrix}.
$$
It is readily verified that $V$ is an injective $\mathbb{C}$-linear map, and so as $\dim S_n(\mathbb{C}) =  \dim \mathbb{C}^{\frac{n(n+1)}{2}}$ it also follows that $V$ is surjective and hence a linear isomorphism.
\begin{thm}
Let $\theta$ be a Blaschke product of order $n$, let ${v}_1, ..., {v}_n$ be a $C_{\theta}$-real basis for $K_{\theta}$ and let $\lambda_1, ... , \lambda_n$ be distinct points in $\mathbb{D}$. Then a symmetric matrix $$
S = (s_{ij})
$$
is a matrix representation of a TTO on $K_{\theta}$ with respect to $v_1, ..., v_n$ if and only if
$$
\ker [c_1 , c_2, ...  c_{2n-1}, V(S)] \neq \{ 0 \},
$$
where $c_l$ is the vectorialisation of the symmetric matrix with $i,j$'th entry given by $a_{ij}^l= \overline{v_i(\lambda_l) v_j(\lambda_l) }$.
\end{thm}
\noindent The following proof closely resembles the proof of the previous theorem, so we only outline the details of the proof.
\begin{proof}
Invoke Theorem 7.1 in \cite{sarason2007algebraic} to show that $[k_{\lambda_l} \otimes C_{\theta}k_{\lambda_l}]_{v_1, ... ,v_n}$ for $l=1, ..., 2n-1$ is a basis for $\mathcal{T}_{\theta}$. It can be computed that $[k_{\lambda_l} \otimes C_{\theta}k_{\lambda_l}]_{v_1, ... ,v_n}$ is the $n$-by-$n$ matrix with $i,j$'th entry given by 
$$
 a_{ij}^l= \overline{v_i(t_l) v_j(t_l) } = a_{ji}^l.
$$
Thus, there exists an $A \in \mathcal{T}_{\theta}$ such that $[A]_{v_1, ... ,v_n} = S$ if and only if there exists constants $\mu_1, ... , \mu_{2n-1} \in \mathbb{C}$ such that
$$
\sum_{l=1}^{2n-1} \mu_l [k_{\lambda_l} \otimes C_{\theta}k_{\lambda_l}]_{v_1, ... ,v_n} = S.
$$
Now applying the map $V$ and using our previous observation that $V$ is a linear isomorphism, we see this is equivalent to the existence of $\mu_1, ... , \mu_{2n-1} \in \mathbb{C}$ such that
$$
\sum_{l=1}^{2n-1} \mu_l c_l = V(S),
$$
which is equivalent to
$$
\ker [c_1 , c_2, ...  c_{2n-1}, V(S)] \neq \{ 0 \}.
$$
\end{proof}

\subsection{When the $C_{\theta}$-real basis is a modified Clark basis}\label{sec3.2}
In this subsection we specialise the results of the previous subsection to the case where the $C_{\theta}$-real basis is a modified Clark basis
\begin{lem}\label{newlem}
Let $\theta$ be a finite Blaschke product and let $\eta \in \mathbb{T}$. Then 
$$
\| k_{\eta} \|^2 = \frac{\theta'(\eta) \eta }{\theta(\eta)} = | \theta' (\eta) | .
$$
\end{lem}
\begin{proof}
$$
\frac{\theta(\eta)}{\eta} \| k_{\eta} \|^2 = \frac{\theta(\eta)}{\eta} \lim_{z \to \eta}\langle k_{\eta}, k_z \rangle = \frac{\theta(\eta)}{\eta} \lim_{z \to \eta} \frac{1 - \overline{\theta(\eta)}\theta(z)}{1 - \overline{\eta}z} = \lim_{z \to \eta} \frac{\theta(z) - \theta(\eta)}{z - \eta} = \theta'(\eta),
$$
where the penultimate equality holds because $| \theta(\eta)|^2 = 1 = |\eta|^2$. Thus $\| k_{\eta} \|^2 = \frac{\theta'(\eta) \eta }{\theta(\eta)} $, and by taking the modulus we deduce $\| k_{\eta} \|^2 = \frac{|\theta'(\eta)| |\eta| }{|\theta(\eta)|} = |\theta'(\eta)|.$
\end{proof}

\begin{thm}\label{clarkbasisrepn}
Let $\theta$ be a Blaschke product of order 3 and $\textbf{cb}_i$ be modified Clark basis for $K_{\theta}$. Then a symmetric matrix $$
S = \begin{pmatrix}
s_1 & s_4 & s_5 \\
s_4 & s_2 & s_6 \\
s_5 & s_6 & s_3
\end{pmatrix},
$$ is a matrix representation of a TTO on $K_{\theta}$ with respect to $\textbf{cb}_i$ if and only if
\begin{equation}\label{solntos6}
s_6 = \frac{1}{ \eta_3 - \eta_2} \left( s_4 \left(\frac{\overline{\eta_{1}}  ^{\frac{1}{2}}|\theta'(\eta_1)|^{\frac{1}{2}}}{\overline{\eta_{3}} ^{\frac{1}{2}}|\theta'(\eta_3)|^{\frac{1}{2}}}\right) ( \eta_1 - \eta_2 ) + s_5 \left(\frac{\overline{\eta_{1}}  ^{\frac{1}{2}}|\theta'(\eta_1)|^{\frac{1}{2}}}{\overline{\eta_{2}} ^{\frac{1}{2}}|\theta'(\eta_2)|^{\frac{1}{2}}}\right)(\eta_3 - \eta_1 ) \right),
\end{equation}
where the $\eta_i$ are given by \eqref{above} and where $\overline{\eta_{j}}  ^{\frac{1}{2}}$ is defined such that if $\arg( \overline{\eta_j}) := \gamma \in [0, 2 \pi)$, then $\arg( \overline{\eta_j}^{\frac{1}{2}}) = \frac{\gamma}{2} \in [0,  \pi) $.
\end{thm}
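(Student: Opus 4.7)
The plan is to specialize Theorem \ref{detthm} with the choice $t_i = \eta_i$ for $i=1,2,3$, exploiting the fact that each modified Clark basis vector $\textbf{cb}_p$ is a scalar multiple of $k_{\eta_p}$, together with the orthogonality relation $\langle \textbf{cb}_p, \textbf{cb}_q \rangle = \delta_{pq}$, which forces $\textbf{cb}_p(\eta_q) = 0$ whenever $p \neq q$. Under this choice, the vectors $c_1, c_2, c_3$ defined in \eqref{c_i} collapse to scalar multiples of the first three standard basis vectors of $\mathbb{C}^6$. Cofactor expansion then reduces the determinant condition of Theorem \ref{detthm} to the vanishing of a $3 \times 3$ determinant in rows $4,5,6$ involving only the columns $c_4, c_5, \tilde{S}$; concretely this expresses that the column $(s_4, s_5, s_6)^T$ lies in the span of the off-diagonal parts of $c_4$ and $c_5$. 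The diagonal entries $s_1, s_2, s_3$ impose no constraint, as they are always absorbed by appropriate choices of $\mu_1, \mu_2, \mu_3$.

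For the second step, I substitute the explicit formula $\textbf{cb}_p(\lambda_i) = c_p(1-\overline{\beta}\theta(\lambda_i))/(1 - \overline{\eta_p}\lambda_i)$, where $\beta = (\alpha+\theta(t))/(1+\overline{\theta(t)}\alpha)$ is the common value of $\theta$ at every $\eta_p$ by \eqref{above} and $c_p = (\overline{\eta_p}\beta)^{1/2}$ is the scalar from \eqref{explicitcb}. The factor $(1-\overline{\beta}\theta(\lambda_i))^2$ is common to each entry of the $i$-th column and may be absorbed into $\mu_i$; likewise the factor $\overline{c_p c_q}$ is common to each row. What remains is to find a left null vector $(v_1,v_2,v_3)$ of the $3 \times 2$ matrix whose entries are, up to these scalars, $1/((1-\eta_p\overline{\lambda_i})(1-\eta_q\overline{\lambda_i}))$ for the three pairs $(p,q)\in\{(1,2),(1,3),(2,3)\}$ and $i\in\{4,5\}$.

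Clearing the common denominator $(1-\eta_1\overline{\lambda_i})(1-\eta_2\overline{\lambda_i})(1-\eta_3\overline{\lambda_i})$ from column $i$ reduces the null vector equations to
$$ v_1(1-\eta_3\overline{\lambda_i}) + v_2(1-\eta_2\overline{\lambda_i}) + v_3(1-\eta_1\overline{\lambda_i}) = 0, \qquad i=4,5.$$
Since $\lambda_4 \neq \lambda_5$, this forces the two independent relations $v_1+v_2+v_3=0$ and $\eta_3 v_1+\eta_2 v_2+\eta_1 v_3=0$, whose one-dimensional solution space is spanned by $(\eta_1-\eta_2,\ \eta_3-\eta_1,\ \eta_2-\eta_3)$. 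Pairing this null vector with the final column $\bigl(s_4/\overline{c_1c_2},\ s_5/\overline{c_1c_3},\ s_6/\overline{c_2c_3}\bigr)^T$ yields the single linear relation
$$ (\eta_1-\eta_2)\frac{s_4}{\overline{c_1c_2}} + (\eta_3-\eta_1)\frac{s_5}{\overline{c_1c_3}} + (\eta_2-\eta_3)\frac{s_6}{\overline{c_2c_3}} = 0.$$

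The final step is the cosmetic simplification of the coefficients. Since $|\beta|=1$ (a direct unimodularity computation on $\beta$) and $|\eta_p|=1$, we have $|c_p|=1$ and hence $\overline{c_p}=1/c_p$, so that $\overline{c_2c_3}/\overline{c_1c_2} = c_1/c_3$ and $\overline{c_2c_3}/\overline{c_1c_3} = c_1/c_2$. Under the paper's phase convention for the square root, the common $\beta^{1/2}$ cancels between numerator and denominator, leaving $c_1/c_3 = \overline{\eta_{1}}^{\frac{1}{2}}/\overline{\eta_{3}}^{\frac{1}{2}}$ and $c_1/c_2 = \overline{\eta_{1}}^{\frac{1}{2}}/\overline{\eta_{2}}^{\frac{1}{2}}$; solving the displayed relation for $s_6$ then produces exactly \eqref{solntos6}. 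I expect the main obstacle to be bookkeeping rather than any single conceptual step: the argument hinges on tracking the various square-root branches and conjugations carefully enough that the $\beta^{1/2}$ factors cancel cleanly under the specific convention prescribed in the theorem statement.
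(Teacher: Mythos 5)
Your proof is correct and takes essentially the same route as the paper: specialise Theorem \ref{detthm} with $t_i=\eta_i$, use $\mathbf{cb}_p(\eta_q)=0$ for $p\neq q$ to collapse the $6\times 6$ determinant to the $3\times 3$ block in rows $4,5,6$ built from $c_4,c_5,\tilde{S}$, and then extract the single linear relation on $s_4,s_5,s_6$. The only (cosmetic) difference is that the paper fixes $\lambda_4=0$, $\lambda_5=\lambda\neq 0$ and expands $\det A_2$ along its third column, whereas you keep $\lambda_4,\lambda_5$ arbitrary and exhibit the left null vector $(\eta_1-\eta_2,\ \eta_3-\eta_1,\ \eta_2-\eta_3)$ directly; both computations land on \eqref{solntos6} after the same cancellation of the $\beta^{1/2}$ phases.
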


\begin{proof}
The proof of this result involves specialising Theorem \ref{detthm}. With the same notations as Theorem \ref{detthm}, we set $v_i = \mathbf{cb}_i$, set $t_i = \eta_i$ for $i=1,2,3$, set $\lambda_4 =0$ and keep $ \mathbb{D} \ni \lambda_5 := \lambda \neq 0 $ arbitrary. Then for $i \neq j$, as $\textbf{cb}_i (\eta_j)$ is a constant multiplied by $ \langle \textbf{cb}_i, \textbf{cb}_j \rangle =0$, we must have $\textbf{cb}_i (\eta_j)=0$. So $[c_1 , c_2, c_3, c_4, c_5, \tilde{S}]$ simplifies to become a matrix of the form 
$$
\begin{pmatrix}
D & A_1 \\
0 & A_2
\end{pmatrix},
$$
where $D$ is a 3-by-3 diagonal matrix with each diagonal entry non-zero, 0 is the 3-by-3 zero matrix and $ A_1, A_2$ are $3$-by-$3$  matrices. So in this case $\det [c_1 , c_2, c_3, c_4, c_5, \tilde{S}] = \det D \det A_2$, and so $\det [c_1 , c_2, c_3, c_4, c_5, \tilde{S}] =0$ if and only if  $ \det A_2 = 0 $. We now argue $ \det A_2 = 0 $ if and only if \eqref{solntos6} holds.

If we write $\textbf{cb}_i = b_i k_{\eta_i}$ for $b_i \in \mathbb{C}$ (where $b_i$ is explicitly given in \eqref{explicitcb}), then  $\overline{\textbf{cb}_i (\lambda)} = \overline{b_i}\overline{ \langle k_{\eta_i}, k_{\lambda} \rangle } = \overline{b_i} k_{\lambda}(\eta_i)$, and so we can compute $A_2$ to be given by
$$
\begin{pmatrix}
x_4 & y_4 & s_4 \\
x_5 & y_5 & s_5 \\
x_6 & y_6 & s_6
\end{pmatrix}, $$
where 
\begin{align}
    x_4 = {\overline{b_2} {k_0 (\eta_2)}}{{\overline{b_1} {k_0 (\eta_1)}}}, \quad y_4 = {\overline{b_2} {k_\lambda (\eta_2)}}{{\overline{b_1} {k_\lambda (\eta_1)}}}, \\
    x_5 = {\overline{b_3} {k_0 (\eta_3)}{{\overline{b_1} {k_0 (\eta_1)}}}}, \quad y_5 = {\overline{b_3} {k_\lambda (\eta_3)}}{{\overline{b_1} {k_\lambda (\eta_1)}}} , \\
    x_6 = {\overline{b_3} {k_0 (\eta_3)}}{{\overline{b_2} {k_0 (\eta_2)}}}, \quad  y_6 = {\overline{b_3} {k_\lambda (\eta_3)}}{{\overline{b_2} {k_\lambda (\eta_2)}}} .
\end{align}
For ease of notation we define the constant $B_{\lambda} = 1 - \overline{\theta(\lambda)}\theta( \eta_1)$. Now noting that as $\theta( \eta_1) = \theta( \eta_2) = \theta( \eta_3)$, by \eqref{RK} we must have $k_0 (\eta_1 ) = k_0 (\eta_2 ) = k_0 (\eta_3 ) = B_0$, and similarly ${k_{\lambda} (\eta_i)}{k_{\lambda} (\eta_j)} = \frac{B_{{\lambda}}^2}{{(1-\overline{\lambda} \eta_i)(1-\overline{\lambda} \eta_j)}}$, and so from the above we obtain
\begin{align}\label{eqq}
    x_4 =& \overline{b_2 b_1} B_0^2, \quad y_4 = \overline{b_2 b_1}\frac{B_{{\lambda}}^2}{{(1-\overline{\lambda} \eta_2)(1-\overline{\lambda} \eta_1)}}, \\ \label{eqq3}
    x_5 =& \overline{b_3 b_1} B_0^2,  \quad y_5 = \overline{b_3 b_1} \frac{B_{{\lambda}}^2}{{(1-\overline{\lambda} \eta_3)(1-\overline{\lambda} \eta_1)}}, \\ \label{eqq2}
    x_6 =& \overline{b_3 b_2} B_0^2, \quad y_6 = \overline{b_3 b_2} \frac{B_{{\lambda}}^2}{{(1-\overline{\lambda} \eta_3)(1-\overline{\lambda} \eta_2)}}.
\end{align}

Expanding the determinant of $A_2 = \begin{pmatrix}
x_4 & y_4 & s_4 \\
x_5 & y_5 & s_5 \\
x_6 & y_6 & s_6
\end{pmatrix} $ via the third column we see  $\det A_2 = 0$ is equivalent to 
\begin{equation}\label{s6}
    s_6(x_4 y_5 - y_4 x_5)  = {s_4 (y_5 x_6 - x_5 y_6) + s_5 (x_4 y_6 -y_4 x_6)} .
\end{equation}
Using the values of $x_4, x_5, x_6 , y_4, y_5, y_6$ found in \eqref{eqq} \eqref{eqq3} \eqref{eqq2} we can write
\begin{align}
y_5 x_6 - x_5 y_6 &= \overline{b_3}^2 \overline{b_1 b_2} B_{\lambda}^2 B_0^2  \frac{1}{1 - \overline{\lambda} \eta_3} \left(\frac{1}{1 - \overline{\lambda} \eta_1} - \frac{1}{1 - \overline{\lambda} \eta_2} \right) \\
&= \overline{b_3}^2 \overline{b_1 b_2} B_{\lambda}^2 B_0^2  \frac{1}{1 - \overline{\lambda} \eta_3} \left(\frac{\overline{\lambda}( \eta_1 - \eta_2)}{(1 - \overline{\lambda} \eta_1)(1 - \overline{\lambda} \eta_2)}  \right), \\
x_4 y_6 - y_4 x_6 &= \overline{b_3 b_1} \overline{ b_2}^2 B_{\lambda}^2 B_0^2  \frac{1}{1 - \overline{\lambda} \eta_2} \left(\frac{1}{1 - \overline{\lambda} \eta_3} - \frac{1}{1 - \overline{\lambda} \eta_1} \right) \\
&= \overline{b_3 b_1} \overline{ b_2}^2 B_{\lambda}^2 B_0^2  \frac{1}{1 - \overline{\lambda} \eta_2} \left(\frac{\overline{\lambda}( \eta_3 - \eta_1)}{(1 - \overline{\lambda} \eta_3)(1 - \overline{\lambda} \eta_1)}  \right), \\
x_4 y_5 - y_4 x_5 &= \overline{b_3}\overline{ b_1}^2 \overline{ b_2} B_{\lambda}^2 B_0^2  \frac{1}{1 - \overline{\lambda} \eta_1} \left(\frac{1}{1 - \overline{\lambda} \eta_3} - \frac{1}{1 - \overline{\lambda} \eta_2} \right) \\
&= \overline{b_3}\overline{ b_1}^2 \overline{ b_2} B_{\lambda}^2 B_0^2 \frac{1}{1 - \overline{\lambda} \eta_1} \left(\frac{\overline{\lambda}( \eta_3 - \eta_2)}{(1 - \overline{\lambda} \eta_3)(1 - \overline{\lambda} \eta_2)}  \right).
\end{align}

At this stage we may now see that $x_4 y_5 - y_4 x_5 \neq 0$, since if $x_4 y_5 - y_4 x_5 =0$, this would imply $\eta_3 = \eta_2$, which can never be the case as pointed out in Section \ref{prelims}. Substituting the above values into \eqref{s6} gives
\begin{equation}\label{s6redo}
s_6 = \frac{1}{ \eta_3 - \eta_2} \left( s_4 \overline{\left(\frac{{b_3}}{{b_1}}\right)} ( \eta_1 - \eta_2 ) + s_5 \overline{\left(\frac{{b_2}}{{b_1}}\right)}(\eta_3 - \eta_1 ) \right).
\end{equation}

Finally as each $\left(\overline{\eta_{i}} \frac{\alpha+\theta(t)}{1+\overline{\theta(t)} \alpha}\right) \in \mathbb{T}$, for $i,j \in \{ 1,2,3 \}$, using the explicit description of $b_i$ given in \eqref{explicitcb}, and Lemma \ref{newlem},  we see $$\overline{\left(\frac{{b_i}}{{b_j}}\right)} = \frac{\left(\overline{\eta_{j}} \frac{\alpha+\theta(t)}{1+\overline{\theta(t)} \alpha}\right)^{\frac{1}{2}}\|k_{\eta_j}\|}{\left(\overline{\eta_{i}} \frac{\alpha+\theta(t)}{1+\overline{\theta(t)} \alpha}\right)^{\frac{1}{2}}\|k_{\eta_i}\|} = \frac{\overline{\eta_{j}}  ^{\frac{1}{2}}|\theta'(\eta_j)|^{\frac{1}{2}}}{\overline{\eta_{i}} ^{\frac{1}{2}}|\theta'(\eta_i)|^{\frac{1}{2}}},$$
where $\overline{\eta_{j}}  ^{\frac{1}{2}}$ is defined such that if $\arg( \overline{\eta_j}) := \gamma \in [0, 2 \pi)$, then $\arg( \overline{\eta_j}^{\frac{1}{2}}) = \frac{\gamma}{2}$. So \eqref{s6redo} simplifies to 
$$
s_6 = \frac{1}{ \eta_3 - \eta_2} \left( s_4 \left(\frac{\overline{\eta_{1}}  ^{\frac{1}{2}}|\theta'(\eta_1)|^{\frac{1}{2}}}{\overline{\eta_{3}} ^{\frac{1}{2}}|\theta'(\eta_3)|^{\frac{1}{2}}}\right) ( \eta_1 - \eta_2 ) + s_5 \left(\frac{\overline{\eta_{1}}  ^{\frac{1}{2}}|\theta'(\eta_1)|^{\frac{1}{2}}}{\overline{\eta_{2}} ^{\frac{1}{2}}|\theta'(\eta_2)|^{\frac{1}{2}}}\right)(\eta_3 - \eta_1 ) \right). \qedhere
$$ 

\end{proof}

\begin{rem}The above theorem may be viewed as a 3-by-3 version of a generalisation of Theorem 1.15 in \cite{TTOonfinitedimn}. In \cite{TTOonfinitedimn} the authors only consider the case when $t=0$ in \eqref{clarkoperator}, whereas our result holds for all $t \in \mathbb{D}$. Furthermore we note that our condition for $S$ to be a matrix representation of a TTO with respect to $\textbf{cb}_i$ eliminates all variables apart from $\eta_i$ for $i = 1,2,3$.
\end{rem}

One can make a corollary to the above theorem which provides numerous examples of matrices which show Question \ref{gotoQ1} to be negative.

\begin{cor}
Consider a symmetric matrix, $S$, of the form
\begin{equation}\label{moreexmaples}
S=    \begin{pmatrix}
a & 0 & 1 \\
0 & b & 0 \\
1 & 0 & c
\end{pmatrix}, \quad S= \begin{pmatrix}
a & 1 & 0 \\
1 & b & 0 \\
0 & 0 & c
\end{pmatrix} \text{  or } S = \begin{pmatrix}
a & 0 & 0 \\
0 & b & 1 \\
0 & 1 & c
\end{pmatrix} 
\end{equation}
where $a, b , c \in \mathbb{R}$. Then $S$ is unitarily equivalent to a TTO, but is not the matrix representation of a TTO with respect to a modified Clark basis.
\end{cor}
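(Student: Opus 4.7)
The plan is to handle the two claims of the corollary separately. The first claim, that each $S$ in \eqref{moreexmaples} is unitarily equivalent to a TTO, follows immediately from the observation that since $a, b, c \in \mathbb{R}$ each such $S$ is a real symmetric matrix, hence Hermitian, hence normal. I would then invoke the result mentioned in the introduction (from Section 5 of \cite{TTOUEsimilarity}) which guarantees that every normal operator on a finite-dimensional Hilbert space is unitarily equivalent to a TTO. Alternatively, one can note that each of these matrices is an orthogonal direct sum of a $2$-by-$2$ symmetric matrix with a $1$-by-$1$ matrix, and apply the $2$-by-$2$ case from the same reference.

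For the second claim I would apply Theorem \ref{clarkbasisrepn} directly. Suppose, for contradiction, that one of the matrices $S$ in \eqref{moreexmaples} is a matrix representation of some TTO with respect to a modified Clark basis $\mathbf{cb}_i$ for some $K_\theta$. Then its off-diagonal entries $s_4, s_5, s_6$ must satisfy the linear relation \eqref{solntos6}, where $\eta_1, \eta_2, \eta_3$ are the Clark points determined by the basis. The key ingredient is that, as recalled in Section \ref{prelims}, the three points $\eta_1, \eta_2, \eta_3$ are \emph{distinct} points on $\mathbb{T}$, so each factor of the form $(\eta_i - \eta_j)$ in \eqref{solntos6} is nonzero, and each of the square-root ratios $\overline{\eta_i}^{1/2}/\overline{\eta_j}^{1/2}$ is nonzero.

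I would then check each of the three forms in turn. For the first matrix, $(s_4, s_5, s_6) = (0, 1, 0)$, so \eqref{solntos6} reduces to
\[
0 \;=\; \frac{1}{\eta_3 - \eta_2}\,\frac{\overline{\eta_1}^{1/2}}{\overline{\eta_2}^{1/2}}(\eta_3 - \eta_1),
\]
which forces $\eta_3 = \eta_1$, a contradiction. For the second matrix, $(s_4, s_5, s_6) = (1, 0, 0)$, and \eqref{solntos6} forces $\eta_1 = \eta_2$, again a contradiction. For the third matrix, $(s_4, s_5, s_6) = (0, 0, 1)$, and \eqref{solntos6} becomes $1 = 0$, which is absurd.

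There is no real obstacle here; once Theorem \ref{clarkbasisrepn} is in hand, the corollary reduces to three direct substitutions using only the distinctness of the Clark points. The only minor point of care is to invoke the right fact (distinctness of $\eta_1, \eta_2, \eta_3$) to conclude that each individual factor in the coefficient of $s_4$ or $s_5$ in \eqref{solntos6} is nonzero.
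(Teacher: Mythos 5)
Your proof is correct and follows essentially the same route as the paper: normality of the real symmetric matrix $S$ gives unitary equivalence to a TTO via Theorem 5.4 of \cite{TTOUEsimilarity}, and the three case-by-case substitutions of $(s_4,s_5,s_6)$ into \eqref{solntos6} (using distinctness of the $\eta_i$) are exactly the check the paper leaves as ``clearly''. The only caveat is your parenthetical alternative via a direct-sum decomposition into a $2$-by-$2$ and a $1$-by-$1$ block: that would only show $S$ is unitarily equivalent to a \emph{direct sum} of TTOs, which is not known to imply unitary equivalence to a single TTO (indeed that is essentially the open question from \cite{recentprogressGarcia}), so you should rely on the normality argument alone.
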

\begin{proof}
In every case we have that $S$ is a normal matrix and so is unitarily equivalent to a TTO by Theorem 5.4 in \cite{TTOUEsimilarity}. Clearly as $b_i \neq 0$, this means $\overline{\frac{b_i}{b_j}} = \left(\frac{\overline{\eta_{j}}  ^{\frac{1}{2}}|\theta'(\eta_j)|^{\frac{1}{2}}}{\overline{\eta_{i}} ^{\frac{1}{2}}|\theta'(\eta_i)|^{\frac{1}{2}}}\right) \neq 0$ for $i, j  = 1,2,3$. Thus by the previous theorem $S$ does not satisfy \eqref{solntos6} for any choice of distinct $\eta_1, \eta_2, \eta_3 \in \mathbb{T}$, and so $S$ can not be the matrix representation of a TTO with respect to a modified Clark basis.
\end{proof}

Naturally, the above working leads us to consider another conjecture which is a modification of Question \ref{gotoQ1}.

\begin{conj}\label{newconj}
Suppose that $M$ is a complex symmetric matrix. If $M$ is unitarily equivalent to a TTO, does there exist an inner function $u$, and a $C_{\theta}$-real basis for the model space ${K}_{u}$ such that $M$ is the matrix representation of a TTO on $K_u$ with respect to this basis? In other words, do all such unitary equivalences between complex symmetric matrices and TTOs arise from $C_{\theta}$-real matrix representations?
\end{conj}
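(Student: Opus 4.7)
The natural approach is to leverage the polynomial characterization announced for Section \ref{sec4}: a $3$-by-$3$ complex symmetric matrix $M$ is the matrix representation of a TTO with respect to some $C_\theta$-real basis if and only if a certain system of polynomial equations in the parameters of $K_\theta$ and of the basis has a real solution. The conjecture then reduces, given that $M$ is already complex symmetric and unitarily equivalent to some TTO, to showing that this polynomial system is always solvable over the reals.

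First I would make the polynomial system fully explicit. The announced reduction, that every $C_\theta$-real basis representation is orthogonally equivalent to a modified Clark basis representation, means one can start from the linear identity \eqref{solntos6} of Theorem \ref{clarkbasisrepn} and conjugate by an arbitrary real orthogonal matrix $R$; demanding that $R^T M R$ satisfy \eqref{solntos6} for some valid choice of Clark data $t, \alpha, \lambda$ and associated $\eta_1, \eta_2, \eta_3 \in \mathbb{T}$, and separating real and imaginary parts, produces the desired polynomial system. The unknowns are the entries of $R$ (modulo orthogonality relations), the $\eta_i$ (modulo $|\eta_i|=1$), and the remaining Blaschke parameters; the coefficients are rational functions in the $s_k$.

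With this system in hand, I would attack the conjecture in stages by verifying it for the classes already known to be unitarily equivalent to TTOs, in particular normal matrices, rank-one operators, the $2$-by-$2$ case inflated to a block, and the inflation constructions of \cite{UETTOstrouse}. The normal matrices in \eqref{moreexmaples} are the most informative test: they fail \eqref{solntos6} for the canonical modified Clark basis, so one must explicitly produce the real orthogonal rotation that moves them onto the variety cut out by \eqref{solntos6} with a compatible Clark choice. Exhibiting such a rotation for a diagonal matrix with distinct real diagonal entries would already be a substantial first result and would strongly suggest how to proceed in general.

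The main obstacle is the last step in full generality. The class of complex symmetric matrices unitarily equivalent to a TTO is not known to have a clean intrinsic characterization; indeed, the broader question recalled in the introduction from \cite{recentprogressGarcia}, whether every complex symmetric matrix is unitarily equivalent to a direct sum of TTOs, is itself open. Consequently, Conjecture \ref{newconj} sits downstream of several hard structural questions, and a uniform proof would either need a new handle on this unitary equivalence class or an intrinsically rotational argument that bypasses it. I would therefore also keep in view the dual strategy of disproving Conjecture \ref{newconj}: the explicit polynomial system provides a concrete obstruction one can test against candidate matrices unitarily equivalent to a TTO, in exactly the spirit in which \eqref{solntos6} produced the counterexamples \eqref{moreexmaples} to Question \ref{gotoQ1}.
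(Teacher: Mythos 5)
The statement you were asked to prove is not a theorem of the paper: it is Conjecture \ref{newconj}, which the author explicitly poses and leaves open. The paper contains no proof of it, only the remark that \emph{if} it were true, then combined with Proposition \ref{generalCreal} every complex symmetric matrix unitarily equivalent to a TTO would be orthogonally equivalent to a modified Clark basis representation. So there is nothing in the paper to compare your argument against, and, as you yourself concede in your final paragraph, your proposal does not close the argument either --- it is a research program, not a proof. The genuine gap is exactly the one you identify: after reducing to the polynomial system of Section \ref{sec4}, one still has to show that system always admits a real solution whenever $M$ is unitarily equivalent to a TTO, and no mechanism for doing so is supplied; the class of such $M$ has no known intrinsic description, so the reduction does not by itself make progress.

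That said, your framing is faithful to the paper's machinery, and one of your intermediate stages is already settled there. You propose, as a ``substantial first result,'' exhibiting the real orthogonal rotation that carries the normal matrices of \eqref{moreexmaples} (and more generally a diagonal matrix with distinct real entries) onto the variety cut out by \eqref{solntos6}. This is precisely the content of the remark following Proposition \ref{generalCreal}: any matrix orthogonally similar to a diagonal matrix $D$ is $U_r^{-1}[A]_{\mathbf{cb}_i}U_r$ for $A=\sum_i d_i\,\mathbf{cb}_i\otimes\mathbf{cb}_i$, so every real symmetric matrix --- including all of \eqref{moreexmaples} --- is a $C_\theta$-real basis representation of a TTO, by the spectral theorem rather than by any computation with \eqref{solntos6}. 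The remaining, genuinely open content of the conjecture concerns complex symmetric matrices that are \emph{not} orthogonally diagonalizable, and for those neither the paper nor your proposal offers a path forward.
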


We observe that if the above conjecture is true then by Proposition \ref{generalCreal} (see below) every complex symmetric matrix $M$ which is unitarily equivalent to a TTO is orthogonally equivalent to the matrix representation of a TTO with respect to a modified Clark basis. 

Finally we remark that, as eluded to in \cite{recentprogressGarcia}, there is also a possibility that the matrix-valued truncated Toeplitz operator may play a role in the modelling of complex symmetric operators. Matrix-valued truncated Toeplitz operators are a vector-valued generalisation of the truncated Toeplitz operator and have been studied in \cite{MTTOpaper, khan2018matrix, khan2020generalized}.

\section{A geometric approach}\label{sec4}
Although function theory was used to construct one class of $C_{\theta}$-real bases, the modified Clark bases, it turns out that when we have the description of one $C_{\theta}$-real basis for a model space $K_{\theta}$, we can use a purely algebraic method to generate all $C_{\theta}$-real bases for $K_{\theta}$. Furthermore, one can describe the matrix representation of a TTO with respect to given $C_{\theta}$-real basis by relating this to the matrix representation of the TTO with respect to a modified Clark basis. We show this with the following proposition.

\begin{rem}
The term orthogonal is typically used for a real matrix $A$ such that $A A^T = A^T A = I_d$, however we note that orthogonal matrices are real unitary matrices. 
\end{rem}

\begin{prop}\label{generalCreal}
Let $\theta$ be of order $n$. Given a $C_{\theta}$-real basis for $K_{\theta}$, $v_i$, and a modified Clark basis for $K_{\theta}$, $\mathbf{cb}_i$, there exists an orthogonal matrix $U_r$ such that $v_i = T U_r e_i$, where $T: \mathbb{C}^n \to K_{\theta}$, $e_i \mapsto \mathbf{cb}_i$. Conversely, given any orthogonal $U_r$ and any modified Clark basis $\mathbf{cb}_i$, $T U_r e_i$ is a $C_{\theta}$-real basis for $K_{\theta}$. 

Furthermore for $v_i = T U_r e_i$ and $A \in \mathcal{T}_{\theta}$ we have 
$$
[A]_{v_i} = U_r^{-1}[A]_{\mathbf{cb}_i} U_r.
$$
\end{prop}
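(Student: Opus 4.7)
My plan is to exploit the fact that, relative to a modified Clark basis, the conjugation $C_\theta$ becomes ordinary coordinate-wise complex conjugation on $\mathbb{C}^n$. Concretely, since $T$ sends the orthonormal basis $e_i$ to the orthonormal basis $\mathbf{cb}_i$, it is unitary, and I claim $C_\theta T = T J$, where $J:\mathbb{C}^n \to \mathbb{C}^n$ is the antilinear involution $J(\sum c_i e_i) = \sum \overline{c_i} e_i$. This is immediate from checking on basis vectors: $C_\theta T e_i = C_\theta \mathbf{cb}_i = \mathbf{cb}_i = T J e_i$, and both sides are antilinear.

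With this in hand, the first claim reduces to linear algebra. Given any orthonormal basis $v_i$ of $K_\theta$, the map $T^{-1}\circ [\text{basis change}]$ produces a unique unitary $U$ on $\mathbb{C}^n$ with $v_i = T U e_i$. I then want to show $v_i$ being $C_\theta$-invariant is equivalent to $U$ being real. Using $C_\theta T = T J$ and antilinearity of $J$, I compute
$$
C_\theta v_i = C_\theta T U e_i = T J U e_i = T \,\overline{U e_i},
$$
so $C_\theta v_i = v_i$ forces $\overline{U e_i} = U e_i$ for every $i$, i.e.\ every column of $U$ is real, and hence $U = U_r$ is a real unitary, i.e.\ orthogonal. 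The converse direction runs the same calculation backwards: for any orthogonal $U_r$, the vectors $T U_r e_i$ form an orthonormal basis (composition of two unitaries is unitary) and
$$
C_\theta(T U_r e_i) = T J U_r e_i = T \,\overline{U_r e_i} = T U_r e_i
$$
because $U_r$ is real.

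For the final statement, I would simply use the standard change-of-basis formula. Writing everything in coordinates via $T$, the operator $A$ on $K_\theta$ corresponds to $T^{-1} A T$ on $\mathbb{C}^n$, which is precisely $[A]_{\mathbf{cb}_i}$. Composing with the further change of basis $U_r$ (so that $v_i = T U_r e_i$) yields
$$
[A]_{v_i} = U_r^{-1}(T^{-1}A T)U_r = U_r^{-1}[A]_{\mathbf{cb}_i}U_r,
$$
as claimed; here $U_r^{-1} = U_r^T$ because $U_r$ is orthogonal.

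I do not anticipate a serious obstacle: the only non-trivial ingredient is the identity $C_\theta T = T J$, which rests on the $C_\theta$-invariance of the modified Clark basis established via \eqref{explicitcb}. Once this is in place, everything else is bookkeeping. The one point requiring mild care is the antilinearity of $C_\theta$ and $J$, which must be tracked when moving them past scalars; getting the conjugation on the right-hand side right is what forces the change-of-basis matrix to be real rather than merely unitary.
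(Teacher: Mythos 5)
Your proposal is correct and follows essentially the same route as the paper: expanding $v_i$ in the modified Clark basis, applying $C_\theta$ and using its antilinearity together with the $C_\theta$-invariance of both bases to force the change-of-basis coefficients to be real, and then noting that a real unitary is orthogonal (the paper works directly with the coefficients $r_{ij}$ where you package the same computation into the identity $C_\theta T = TJ$, but the content is identical). The converse and the conjugation formula $[A]_{v_i} = U_r^{-1}[A]_{\mathbf{cb}_i}U_r$ are handled exactly as in the paper.
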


\begin{proof}
Given a $C_{\theta}$-real basis $v_i$, and a modified Clark basis $\mathbf{cb}_i$ there clearly exists constants $r_{ij}$ such that $v_i = r_{i1}\mathbf{cb}_1 + ... + r_{in}\mathbf{cb}_n$. As $v_i$ and $\mathbf{cb}_i$ are $C_{\theta}$-invariant, we must have 
$$
r_{i1}\mathbf{cb}_1 + ... + r_{in}\mathbf{cb}_n = v_i = C_{\theta} (v_i) = \overline{r_{i1}} \mathbf{cb}_1 + ... + \overline{r_{in}}\mathbf{cb}_n ,
$$
and so for every $i, \, j \in \{ 1, ... , n \}$ we must have $r_{ij} = \overline{r_{ij}} \in \mathbb{R}$. This establishes that $v_i = T U_r e_i$, where $U_r$ is a real matrix. To show $U_r$ is unitary we observe that $T U_r$ is unitary, as it sends an orthonormal basis to an orthonormal basis, and so is $T$, so $U_r$ must also be unitary. 

To show the converse part of the proposition one can readily see that each $T U_r e_i$, being a real linear combination of a $C_{\theta}$-real basis, is itself $C_{\theta}$-invariant, and $T U_r e_1, ..., T U_r e_n$ is an orthonormal basis because $ e_1, ..., e_n$ is an orthonormal basis and $T U_r$ is unitary. In order to prove the second statement of the proposition, observe that 
$$
[A]_{v_i} = U_r^{-1} T^{-1} A T U_r = U_r^{-1} [A]_{\mathbf{cb}_i} U_r . \qedhere
$$
\end{proof}
\begin{rem}
By the above proposition we may observe that if a 3-by-3 matrix $M$ is such that there exist an orthogonal matrix $U_r$ with $U_r M U_r^{-1} = D$ where $D$ is a diagonal matrix with entries $d_1, d_2, d_3$, then $M$ is a representation of a TTO with respect to a $C_{\theta}$-real basis. Indeed, if we consider $A:= \sum_{i=1}^3 d_i \mathbf{cb}_i \otimes \mathbf{cb}_i$, for some modified Clark basis $\mathbf{cb}_i$ then $[A]_{\mathbf{cb}_i} = D$, and so for the basis $v_i = T U_r e_i$ we have $[A]_{v_i} = U_r^{-1} [A]_{\mathbf{cb}_i} U_r = U_r^{-1} D U_r = M$. With this observation we see that although the matrices in \eqref{moreexmaples} can not be a modified Clark basis representation of a TTO, they are a matrix representation of a TTO with respect to some $C_{\theta}$-real basis.
\end{rem}

In the remainder of this section we will exploit the above proposition and show that the problem of determining whether a given 3-by-3 symmetric matrix, $M$, is a matrix representation of a TTO on some fixed model space with respect to a $C_{\theta}$-real basis can actually be rephrased purely as a geometric problem.

By Proposition \ref{generalCreal} we can see that a given symmetric matrix
$$
M= \begin{pmatrix}
m_1 & m_4 & m_5 \\
m_4 & m_2 & m_6 \\
m_5 & m_6 & m_3 
\end{pmatrix},
$$ is a matrix representation of a TTO on $K_{\theta}$ with respect to a $C_{\theta}$-real basis if and only if for a modified Clark basis for $K_{\theta}$, $\mathbf{cb}_i$, there exists a orthogonal matrix $U_r$ and a $A \in \mathcal{T}_{\theta}$ such that
\begin{equation}\label{observation}
    U_r M U_r^{T}=  [A]_{\mathbf{cb}_i} .
\end{equation}
In fact, this is actually equivalent to the existence of a special orthogonal matrix $U_s$ such that $U_s M U_s^{T}=  [A]_{\mathbf{cb}_i}$, as if \eqref{observation} holds for some orthogonal $U_r$ with $\det U_r = -1$ then setting $U_s = - U_r$ we see $U_s M U_s^{T}=  [A]_{\mathbf{cb}_i}$.

Given matrices $M$ and $S$, when trying to solve a matrix equation of the form $XMX^T = S$ for $X$, we may actually rephrase this equation as a problem in algebraic geometry. In the case of \eqref{observation} we have the following.

\begin{thm}
Let $\theta$ be a Blaschke product of order 3, let $\mathbf{cb}_i$ be a modified Clark basis for $K_{\theta}$ with corresponding points on the boundary $\eta_i$ given by \eqref{above}. Consider the system of polynomials with variables $r_1, r_2, ... ,r_9$ defined by
\begin{equation}\label{R1}
r_1^2 + r_4^2 + r_7^2 =1, \quad r_2^2 + r_5^2 + r_8^2 =1, \quad r_3^2 + r_6^2 + r_9^2 =1,
\end{equation}
\begin{equation}\label{R2}
r_1 r_2 + r_4 r_5 + r_7 r_8 =0, \quad r_2 r_3 + r_5 r_6 + r_8 r_9 =0, \quad r_1 r_3 + r_4 r_6 + r_7 r_9 =0,
\end{equation}
\begin{equation}\label{R3}
    (\eta_3 - \eta_2) A_6 - \left(\frac{\overline{\eta_{1}}  ^{\frac{1}{2}}|\theta'(\eta_1)|^{\frac{1}{2}}}{\overline{\eta_{3}} ^{\frac{1}{2}}|\theta'(\eta_3)|^{\frac{1}{2}}}\right) ( \eta_1 - \eta_2 ) A_4 - \left(\frac{\overline{\eta_{1}}  ^{\frac{1}{2}}|\theta'(\eta_1)|^{\frac{1}{2}}}{\overline{\eta_{2}} ^{\frac{1}{2}}|\theta'(\eta_2)|^{\frac{1}{2}}}\right)(\eta_3 - \eta_1 ) A_5 =0,
\end{equation}
where the square root is given by the same convention as in \eqref{solntos6} and where 
\newline $A_6 = m_1  r_4  r_7+m_4  r_5  r_7+m_5  r_6  r_7+m_4  r_4  r_8+m_2  r_5  r_8+m_6  r_6  r_8+m_5  r_4  r_9+m_6  r_5  r_9+m_3  r_6  r_9$,
\vskip 0.1cm
\noindent $A_4 =m_1r_1 r_4+m_4 r_2 r_4+m_5 r_3 r_4+m_4 r_1 r_5+m_2 r_2 r_5+m_6 r_3 r_5+m_5 r_1 r_6+m_6 r_2 r_6+m_3 r_3 r_6$
\vskip 0.1cm
\noindent $A_5 = m_1  r_1  r_7+m_4  r_2  r_7+m_5  r_3  r_7+m_4  r_1  r_8+m_2  r_2  r_8+m_6  r_3  r_8+m_5  r_1  r_9+m_6  r_2  r_9+m_3  r_3  r_9 $.

\noindent Then 
$$
M= \begin{pmatrix}
m_1 & m_4 & m_5 \\
m_4 & m_2 & m_6 \\
m_5 & m_6 & m_3 
\end{pmatrix}
$$
is a matrix representation of a TTO on $K_{\theta}$ with respect to a $C_{\theta}$-real basis if and only if equations \eqref{R1} \eqref{R2} and \eqref{R3} are simultaneously satisfied with a real solution.
\end{thm}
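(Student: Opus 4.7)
The plan is to unpack the discussion preceding the theorem and recognize that equations \eqref{R1}, \eqref{R2}, and \eqref{R3} encode exactly the two constraints that arise naturally from Proposition \ref{generalCreal} combined with Theorem \ref{clarkbasisrepn}. Specifically, as observed in the paragraph immediately before the statement, $M$ is the matrix representation of a TTO on $K_\theta$ with respect to some $C_\theta$-real basis if and only if there exists a real orthogonal matrix $U_r$ such that $U_r M U_r^T = [A]_{\mathbf{cb}_i}$ for some $A \in \mathcal{T}_\theta$. The goal is to show that this latter condition is equivalent to the simultaneous solvability of \eqref{R1}, \eqref{R2}, \eqref{R3} over $\mathbb{R}$.

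First I would parameterize $U_r$ by its nine real entries, taking
$$U_r = \begin{pmatrix} r_1 & r_2 & r_3 \\ r_4 & r_5 & r_6 \\ r_7 & r_8 & r_9 \end{pmatrix}.$$
The orthogonality condition $U_r^T U_r = I_3$ then decomposes into exactly six independent scalar equations: the three unit-norm conditions on the columns of $U_r$ are precisely \eqref{R1}, and the three pairwise orthogonality conditions on distinct columns are precisely \eqref{R2}. Next, I would expand the product $U_r M U_r^T$ and record its entries strictly above the diagonal. A direct expansion shows that the $(1,2)$, $(1,3)$, and $(2,3)$ entries of $U_r M U_r^T$ are exactly the polynomials $A_4$, $A_5$, and $A_6$ appearing in the theorem. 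Since $M$ is complex symmetric and $U_r$ is real, $U_r M U_r^T$ is automatically complex symmetric, so no additional constraint on the diagonal entries is needed.

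The final step is to invoke Theorem \ref{clarkbasisrepn}: the complex symmetric matrix $U_r M U_r^T$ is the matrix representation of a TTO on $K_\theta$ with respect to $\mathbf{cb}_i$ if and only if its off-diagonal entries satisfy the linear relation \eqref{solntos6}. Substituting $s_4 = A_4$, $s_5 = A_5$, $s_6 = A_6$ into \eqref{solntos6} and clearing the denominator $\eta_3 - \eta_2$ yields precisely \eqref{R3}. Putting the three systems together, $M$ is a $C_\theta$-real basis representation of a TTO exactly when \eqref{R1}, \eqref{R2}, and \eqref{R3} admit a simultaneous real solution in $r_1, \ldots, r_9$, which is the claimed equivalence.

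The main obstacle is essentially bookkeeping: one must verify by hand that the three expressions $A_4$, $A_5$, $A_6$ stated in the theorem really do coincide with the corresponding off-diagonal entries of $U_r M U_r^T$. This is a tedious but straightforward expansion of a $3\times 3$ matrix product, and the use of complex symmetry of $M$ is what lets one collect the $m_4$, $m_5$, $m_6$ terms into the compact form written in the statement.
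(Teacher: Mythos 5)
Your proposal is correct and follows essentially the same route as the paper: use \eqref{observation} to reduce the question to solving $U_r M U_r^T = [A]_{\mathbf{cb}_i}$, identify \eqref{R1}--\eqref{R2} with orthogonality of $U_r$, recognise $A_4, A_5, A_6$ as the off-diagonal entries of $U_r M U_r^T$, and apply Theorem \ref{clarkbasisrepn} to obtain \eqref{R3}. Your explicit remark that $U_r M U_r^T$ is automatically complex symmetric (so Theorem \ref{clarkbasisrepn} applies) is a small point the paper leaves implicit, but otherwise the two arguments coincide.
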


\begin{proof}
As shown by \eqref{observation}, we see that $M$ is a matrix representation of a TTO on $K_{\theta}$ with respect to a $C_{\theta}$-real basis if and only if there exists orthogonal matrix $\left(\begin{array}{lll}
r_{1} & r_{2} & r_{3} \\
r_{4} & r_{5} & r_{6} \\
r_{7} & r_{8} & r_{9}
\end{array}\right)$ and a $A \in \mathcal{T}_{\theta}$ such that 
\begin{equation}\label{realalggeom}
    \left(\begin{array}{lll}
r_{1} & r_{2} & r_{3} \\
r_{4} & r_{5} & r_{6} \\
r_{7} & r_{8} & r_{9}
\end{array}\right) M \left(\begin{array}{lll}
r_{1} & r_{2} & r_{3} \\
r_{4} & r_{5} & r_{6} \\
r_{7} & r_{8} & r_{9}
\end{array}\right)^{T} = [A]_{\mathbf{cb}_i}.
\end{equation}
Now equations \eqref{R1} \eqref{R2} being satisfied (with a real solution) are equivalent to $$\left(\begin{array}{lll}
r_{1} & r_{2} & r_{3} \\
r_{4} & r_{5} & r_{6} \\
r_{7} & r_{8} & r_{9}
\end{array}\right)$$ being a orthogonal matrix. Furthermore, by multiplying out the left hand side of \eqref{realalggeom}, we see that the left hand side of \eqref{realalggeom} is a representation of a TTO on $K_{\theta}$ with respect to $\mathbf{cb}_i$ (i.e. that the condition from Theorem \ref{clarkbasisrepn} is satisfied) if and only if \eqref{R3} is satisfied.
\end{proof}

\begin{rem}
We notice that in the above theorem the condition that $M$ is a matrix representation of a TTO on $K_{\theta}$ with respect to a $C_{\theta}$-real basis is independent of our original choice of $\mathbf{cb}_i$.
\end{rem}

\begin{rem}
In the above theorem, there is a procedure to find a choice of $\eta_1, \eta_2, \eta_3$, which is the following. Consider the modified Clark basis for $K_{\theta}$ with parameters in \eqref{above} \eqref{explicitcb} given by $\alpha=1$ and $t$ such that $\theta(t) = 0$. Then in order to find $\eta_i$ one must solve $\theta(z) = \prod_{i=1}^3 \frac{z-\lambda_i}{1- \overline{\lambda_i}z} =1$, or equivalently, after multiplying both sides by $\prod_{i=1}^3 1- \overline{\lambda_i}z$ and rearranging
$$
z^3 K_3 - z^2 K_2 + z K_1 - K_0 = 0,
$$
where $K_3 = 1 + \overline{ \lambda_1 \lambda_2 \lambda_3} , \, K_2=  \lambda_1 + \lambda_2 + \lambda_3 + \overline{\lambda_1 \lambda_2} + \overline{\lambda_1 \lambda_3} + \overline{\lambda_2 \lambda_3}, \, K_1 = \lambda_1 \lambda_2 + \lambda_1 \lambda_3 + \lambda_2 \lambda_3 + \overline{\lambda_1}+ \overline{\lambda_2}+ \overline{\lambda_3}, \, K_0 = \lambda_1 \lambda_2 \lambda_3 +1$.
\end{rem}

\section*{Acknowledgements}
The author is grateful to EPSRC for financial support.
\newline
The author is grateful to Dr Ben Sharp for his advice and guidance during the preparation of this paper.
\section*{Statements and Declarations}
This study was funded by the EPSRC.
\newline 
Declarations of interest: none.

\newpage 

\bibliographystyle{plain}
\bibliography{bibliography.bib}

\begin{thebibliography}{10}

\bibitem{baranov2010symbols}
A.~Baranov, R.~Bessonov, and V.~Kapustin.
\newblock Symbols of truncated {T}oeplitz operators.
\newblock {\em J. Funct. Anal.}, 261(12):3437--3456, 2011.

\bibitem{baranov2010bounded}
A.~Baranov, I.~Chalendar, E.~Fricain, J.~Mashreghi, and D.~Timotin.
\newblock Bounded symbols and reproducing kernel thesis for truncated
  {T}oeplitz operators.
\newblock {\em Journal of Functional Analysis}, 259(10):2673--2701, 2010.

\bibitem{nonhermitianphysics}
C.~M. Bender.
\newblock Introduction to {P}{T}-symmetric quantum theory.
\newblock {\em Contemp.Phys.}, 46(4):277--292, 2005.

\bibitem{bessenovfredholm}
R.~V. Bessonov.
\newblock Fredholmness and compactness of truncated {T}oeplitz and {H}ankel
  operators.
\newblock {\em Integral Equations Operator Theory}, 82(4):451--467, 2015.

\bibitem{MR3398735}
M.~C. C\^amara and J.~R. Partington.
\newblock Spectral properties of truncated {T}oeplitz operators by equivalence
  after extension.
\newblock {\em J. Math. Anal. Appl.}, 433(2):762--784, 2016.

\bibitem{TTOUEsimilarity}
J.~A. Cima, S.~R. Garcia, W.~T. Ross, and W.~R. Wogen.
\newblock Truncated {T}oeplitz operators: spatial isomorphism, unitary
  equivalence, and similarity.
\newblock {\em Indiana Univ. Math. J.}, 59(2):595--620, 2010.

\bibitem{cima2000backward}
J.~A. Cima and W.~T. Ross.
\newblock {\em The backward shift on the Hardy space}.
\newblock American Mathematical Soc., 2000.

\bibitem{TTOonfinitedimn}
J.~A. Cima, W.~T. Ross, and W.~R. Wogen.
\newblock Truncated {T}oeplitz operators on finite dimensional spaces.
\newblock {\em Oper. Matrices}, 2(3):357--369, 2008.

\bibitem{duren1970theory}
P.~L. Duren.
\newblock Theory of ${H}^p$ spaces.
\newblock {\em Pure Appl. Math}, 38:74, 1970.

\bibitem{CCOgarcia}
S.~R. Garcia.
\newblock Conjugation and {C}lark operators.
\newblock In {\em Recent advances in operator-related function theory}, volume
  393 of {\em Contemp. Math.}, pages 67--111. Amer. Math. Soc., Providence, RI,
  2006.

\bibitem{garciaopen}
S.~R. Garcia.
\newblock Three questions about complex symmetric operators.
\newblock {\em Integral Equations Operator Theory}, 72(1):3--4, 2012.

\bibitem{modelspacesgarcia}
S.~R. Garcia, J.~Mashreghi, and W.~T. Ross.
\newblock {\em Introduction to model spaces and their operators}, volume 148 of
  {\em Cambridge Studies in Advanced Mathematics}.
\newblock Cambridge University Press, Cambridge, 2016.

\bibitem{FBPandconnections}
S.~R. Garcia, J.~Mashreghi, and W.~T. Ross.
\newblock {\em Finite {B}laschke products and their connections}.
\newblock Springer, Cham, 2018.

\bibitem{CSO2}
S.~R. Garcia and D.~E. Poore.
\newblock On the closure of the complex symmetric operators: compact operators
  and weighted shifts.
\newblock {\em J. Funct. Anal.}, 264(3):691--712, 2013.

\bibitem{garciaUETTOanalyticsymbols}
S.~R. Garcia, D.~E. Poore, and W.~T. Ross.
\newblock Unitary equivalence to a truncated {T}oeplitz operator: analytic
  symbols.
\newblock {\em Proc. Amer. Math. Soc.}, 140(4):1281--1295, 2012.

\bibitem{garciaapp}
S.~R. Garcia, E.~Prodan, and M.~Putinar.
\newblock Mathematical and physical aspects of complex symmetric operators.
\newblock {\em J. Phys. A}, 47(35):353001, 54, 2014.

\bibitem{recentprogressGarcia}
S.~R. Garcia and W.~T. Ross.
\newblock Recent progress on truncated {T}oeplitz operators.
\newblock {\em Blaschke Products and Their Applications}, 65:265--319, 2013.

\bibitem{CSO1}
S.~Jung, Y.~Kim, E.~Ko, and J.~E. Lee.
\newblock Complex symmetric weighted composition operators on {$H^2(\Bbb{D})$}.
\newblock {\em J. Funct. Anal.}, 267(2):323--351, 2014.

\bibitem{khan2020generalized}
R.~Khan.
\newblock The generalized {C}rofoot transform.
\newblock {\em Oper. Matrices}, 15(1):225--237, 2021.

\bibitem{khan2018matrix}
R.~Khan and D.~Timotin.
\newblock Matrix valued truncated {T}oeplitz operators: basic properties.
\newblock {\em Complex Analysis and Operator Theory}, 12(4):997--1014, 2018.

\bibitem{nikolski2002operators}
N.~K. Nikolski.
\newblock {\em Operators, Functions, and Systems-An Easy Reading: Hardy,
  Hankel, and Toeplitz}, volume~1.
\newblock American Mathematical Soc., 2002.

\bibitem{CSO3}
S.~W. Noor.
\newblock On an example of a complex symmetric composition operator on
  {$H^2(\Bbb{D})$}.
\newblock {\em J. Funct. Anal.}, 269(6):1899--1901, 2015.

\bibitem{oloughlin2020nearly}
R.~O'Loughlin.
\newblock Nearly invariant subspaces with applications to truncated {T}oeplitz
  operators.
\newblock {\em Complex Anal. Oper. Theory}, 14, 2020, no.8, 86.

\bibitem{mythesis}
R.~O'Loughlin.
\newblock Multidimensional {T}oeplitz and truncated {T}oeplitz operators.
\newblock \url{https://etheses.whiterose.ac.uk/29284/}, University of {L}eeds,
  2021.

\bibitem{MTTOpaper}
R.~O'Loughlin.
\newblock Matrix-valued truncated {T}oeplitz operators: unbounded symbols,
  kernels and equivalence after extension.
\newblock {\em Integral Equations Operator Theory}, 94(1):Paper No. 5, 20,
  2022.

\bibitem{symbolsofcompact}
R.~O'Loughlin.
\newblock Symbols of compact truncated {T}oeplitz operators.
\newblock {\em Journal of Mathematical Analysis and Applications},
  507(2):125819, 2022.

\bibitem{sarason2007algebraic}
D.~Sarason.
\newblock Algebraic properties of truncated {T}oeplitz operators.
\newblock {\em Oper. Matrices}, 1(4):491--526, 2007.

\bibitem{UETTOstrouse}
E.~Strouse, D.~Timotin, and M.~Zarrabi.
\newblock Unitary equivalence to truncated {T}oeplitz operators.
\newblock {\em Indiana Univ. Math. J.}, 61(2):525--538, 2012.

\end{thebibliography}

\end{document}